\newtheorem{theorem}{Theorem}
\newtheorem{lemma}{Lemma}
\newtheorem{corollary}{Corollary}
\newtheorem{proposition}{Proposition}
\newtheorem{remark}{Remark}
\newtheorem{definition}{Definition}
\newtheorem{example}{Example}
\DeclareMathOperator{\CP}{CP}
\DeclareMathOperator{\LCP}{LCP}
\DeclareMathOperator{\SOL}{SOL}
\DeclareMathOperator{\cone}{cone}
\DeclareMathOperator{\intr}{int}
\DeclareMathOperator{\glm}{GL(m,\mathbb R)}
\DeclareMathOperator{\dy}{\mathcal O}
\newcommand{\lng}{\langle}
\newcommand{\rng}{\rangle}
\newcommand{\R}{\mathbb R}
\newcommand{\tp}{^\top}
\newcommand{\pb}{\textbf{P}}
\newcommand{\pq}{\textbf{Q}}
\newcommand{\pf}{\textbf{FS}}
\newenvironment{proof}{{\noindent\bf Proof.}}{\hfill$\Box$\\}
\begin{document}

\large

\title{Linear complementarity on simplicial cones and the congruence orbit of matrices
\thanks{{\it 2010 AMS Subject Classification:} 90C33, 15A04; {\it Key words and phrases:} linear complementarity, \pq-matrix, \pb-matrix, congruence orbit}
}
\author{A. B. N\'emeth\\Faculty of Mathematics and Computer Science\\Babe\c s Bolyai University, Str. Kog\u alniceanu nr. 1-3\\RO-400084
Cluj-Napoca, Romania\\email: nemab@math.ubbcluj.ro \and S. Z. N\'emeth\\School of Mathematics, University of Birmingham\\Watson Building,
Edgbaston\\Birmingham B15 2TT, United Kingdom\\email: s.nemeth@bham.ac.uk}
\date{}
\maketitle

\begin{abstract}
The congruence orbit of a matrix has a natural connection
with the linear complementarity problem on simplicial cones formulated for the
matrix. In terms of the two approaches -- the congruence orbit and the family of
all simplicial cones -- we give equivalent classification of matrices from 
the point of view of the complementarity theory.
\end{abstract}

\section{Introduction}

We use in this introduction some standard terms and notations which will also be
specified in the next section.

Let be $K\subset \R^m$ a cone, $K^*\in \R^m$ be its dual, 
$M:\R^m\to \R^m$ a linear mapping and $q\in \R^m$.
The problem
\begin{equation*}
	\LCP(K,q,M): \;\textrm{find}\;x\in K \;\textrm{with}\; Mx+q\in K^*\;\textrm{and}\; \lng x,Mx+q\rng=0
\end{equation*}
is called \emph{the linear complementarity problem on the cone $K$}.
In the case $K=\R^m_+$ it is denoted by $\LCP(q,M)$ and called
\emph{the classical linear complementarity problem}.

As one of the most important problems in optimization theory, 
the classical linear complementarity problem has a broad literature (see \cite{MR3396730}
and the literature therein).
Despite of the important progress last decades in this field, it still
in the center of interest nowadays.

Besides the classical case, the linear complementarity on
Lorentz cone and the cone of positive semi-definite matrices emerged as an important
topic in the previous decade \cite{MR1782157,GowdaSznajderTao2004}.

When $K\subset \R^m$ is a simplicial cone, the linear complementarity
problem can be transformed by a linear mapping in the classical one. But 
general simplicial cones can differ substantially from each other in some aspects, 
one of them being the projection onto the cone, the mapping playing 
an essential role in the solution of optimization problems.  
If the linear mapping $M$ is given, such an approach relates the problem to
the \emph{congruence orbit} of $M$, i.e. to the set of maps
\begin{equation}\label{orbit}
\dy (M) =\{L\tp ML:\;L\in \glm \},
\end{equation}
where $\glm$ denotes the \emph{general linear group} of $\R^m$,
i.e., the group of invertible linear maps of the vector space $\R^m$.

Among other results, in this note we will show 
that $\LCP(K,q,M)$ is feasible
for an arbitrary simplicial cone $K\subset \R^m$ and an arbitrary $q\in \R^m$
if and only if 
$M$ is a \emph{positive definite} mapping \cite{MR3396730}, i.e., if 
$\lng Mx,x \rng > 0$, $\forall x\in \R^m$, $x\ne 0$,
which is equivalent to saying that this property holds 
for each member of $\dy (M)$. It turns out that this
property is also equivalent with the much stronger $\pb$ and $\pq$-properties of all members of $\dy (M)$ and equivalently,
with the corresponding properties of $M$ for each simplicial cone $K$.

It is possible that some of the problems considered in the present note 
already occured in a different setting in the huge literature on
linear complementarity. Even so, the approach of considering
classical $\pb$ and $\pq$-properties of a matrix for the \emph{whole family}
of simplicial cones and the relation with the congruence orbit of the matrix
seems a novel approach which justifies our following investigation.

\section{Terminology and notations} \label{terminology}

Denote by $\R^m$ the $m$-dimensional Euclidean space endowed with the scalar 
product $\lng\cdot,\cdot\rng:\R^m\times\R^m\to\R,$ and the Euclidean norm $\|\cdot\|$ and topology 
this scalar product defines. Denote $\lng x,y\rng=0$ by $x\perp y$.

Throughout this note we shall use some standard terms and results from convex geometry 
(see e.g. \cite{MR1451876}). 

Let $K$ be a \emph{convex cone} in $\R^m$, i. e., a nonempty set with
(i) $K+K\subset K$ and (ii) $tK\subset K,\;\forall \;t\in \R_+ =[0,+\infty)$.
The convex cone $K$ is called \emph{pointed}, if $K\cap (-K)=\{0\}.$
The cone $K$ is {\it generating} if $K-K=\R^m$. $K$ is generating
if and only if $\intr K\not= \varnothing.$ 

A closed, pointed generating convex cone is called \emph{proper cone}. 
 
The set
\begin{equation}\label{simplicial}
 K= \cone\{x^1,\dots,x^m\}:=\{t_1x^1+\dots+t_m x^m:\;t_i\in \R_+,\;i=1,\dots,m\}
\end{equation}
with $x^1,\,\dots,\,x^m$ linearly independent vectors in $\R^m$ is called a \emph{simplicial cone}.
A simplicial cone is proper.

The \emph{dual} of the convex cone $K$ is the set
$$K^*:=\{y\in \R^m:\;\lng x,y\rng \geq 0,\;\forall \;x\in K\},$$
with $\lng\cdot,\cdot\rng $ is the standard scalar product in $\R^m$.

The cone $K$ is called \emph{self-dual} if $K=K^*.$ If $K$
is self-dual, then it is proper.

In all that follows we will suppose that $\R^m$ is endowed with a
Cartesian system having an orthonormal basis $e^1,\dots,e^m$
and the elements $x\in \R^m$ are represented by the column vectors
$x=(x_1,\dots,x_m)\tp$, with $x_i$ the coordinates of $x$
with respect to this basis. (That is, $\R^m$ will be the vector space of $m$-dimensional column vectors.) 

The set
\[\R^m_+=\{x=(x_1,\dots,x_m)\tp\in \R^m:x_i\geq 0,\mbox{ }
i=1,\dots,m\}\]
is called the \emph{non-negative orthant} of the above introduced Cartesian
reference system.
In fact
$$\R^m_+=\cone \{e^1,...,e^m\}.$$
 A direct verification shows that $\R^m_+$ is a
self-dual cone. 

If $K$ is the simplicial cone defined by (\ref{simplicial}) and $A$ is the non-singular matrix
transforming the basis $e^1,...,e^m$ to the linear independent vectors $x^1,...,x^m$,
then obviously
\begin{equation}\label{repre}
K=A\R^m_+.
\end{equation}

For simplicity from now on we will call a convex cone simply cone.

\section{Changing the cone linearly}

\begin{lemma}\label{gentran}
Let $W\subset \R^m$ be a cone and $A\in \glm$. Then $K=AW$ is a cone too
and $K^*=A^{-T}W^*$.
\end{lemma}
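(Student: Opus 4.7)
The plan is to verify the two claims directly from the definitions, exploiting linearity of $A$ and the adjoint identity $\lng Ax,y\rng=\lng x,A\tp y\rng$.

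First I would check that $K=AW$ is a cone. Given $u,v\in K$, write $u=Aw_1$, $v=Aw_2$ with $w_1,w_2\in W$; then $u+v=A(w_1+w_2)\in AW$ because $w_1+w_2\in W$ and $A$ is linear. Similarly, for $t\in \R_+$, $tu=A(tw_1)\in AW$ since $tw_1\in W$. This handles both cone axioms and uses nothing beyond linearity (invertibility of $A$ is not needed for this part).

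Next I would compute $K^*$. By definition,
\[
K^*=\{y\in \R^m:\lng x,y\rng\ge 0,\ \forall x\in K\}=\{y\in \R^m:\lng Aw,y\rng\ge 0,\ \forall w\in W\}.
\]
Using the adjoint identity $\lng Aw,y\rng=\lng w,A\tp y\rng$, the condition $\lng Aw,y\rng\ge 0$ for every $w\in W$ becomes $\lng w,A\tp y\rng\ge 0$ for every $w\in W$, i.e.\ $A\tp y\in W^*$. Hence
\[
K^*=\{y\in\R^m:A\tp y\in W^*\}=(A\tp)\mo W^*=A\tpm W^*,
\]
where the inverse exists precisely because $A\in\glm$.

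There is really no obstacle in this lemma; it is a bookkeeping exercise in the definitions of cone, dual cone, and the adjoint. The only point to be careful about is that invertibility of $A$ is used solely in the last step to rewrite $\{y:A\tp y\in W^*\}$ as $A\tpm W^*$, while the cone property of $AW$ holds for any linear $A$.
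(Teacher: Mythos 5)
Your proposal is correct and follows essentially the same route as the paper: the paper dismisses the cone axioms as trivial (which you verify explicitly) and then establishes $y\in K^*\iff A\tp y\in W^*\iff y\in A\tpm W^*$ via the same adjoint identity. Your added remark that invertibility is only needed in the final step is a nice touch but does not change the argument.
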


\begin{proof}
The first assertion is trivial.

Take $y\in K^*$. This is equivalent to 
$$\lng Aw,y\rng =\lng w,A\tp y\rng \geq 0,\mbox{ }\forall w\in W.$$
Thus, $y\in K^*\iff A\tp y\in W^* \iff y\in A^{-T}W^*.$
\end{proof}

\begin{corollary}\label{gentranself}
If $W^*=W$, $A\in\glm$ then
$$ K=AW \iff K^*=A^{-T}W.$$
If $K$ is the simplicial cone (\ref{simplicial}), then,
because of the representation (\ref{repre}) and the self-duality
of $\R^m_+$, we have
$$K^*=A^{-T}\R^m_+.$$

\end{corollary}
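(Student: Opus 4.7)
The plan is to read the corollary as a direct application of Lemma \ref{gentran}, exploiting the self-duality $W^* = W$ and the bipolar identity $K = K^{**}$ that every closed convex cone satisfies.

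First I would dispatch the forward direction: assuming $K = AW$, the lemma immediately yields $K^* = A^{-T}W^*$, and self-duality of $W$ rewrites this as $K^* = A^{-T}W$. No extra work is needed here.

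For the converse, I would reuse the same lemma but with $A^{-T}$ playing the role of the invertible matrix and $K^*$ playing the role of the transformed cone. Given $K^* = A^{-T}W$, the lemma applied to the cone $W$ and the matrix $A^{-T}$ gives $(K^*)^* = (A^{-T})^{-T} W^* = AW$, again invoking $W^* = W$. Since $K$ is a closed convex cone (implicit in the setting, since the intended application is to simplicial cones), the bipolar identity $K^{**} = K$ closes the argument: $K = AW$. This is the only step with any subtlety, though it is not a real obstacle, merely a place to flag the standing hypothesis on $K$.

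For the second assertion, no further argument is required: $\R^m_+$ is self-dual, and the representation $K = A\R^m_+$ from (\ref{repre}) shows that the simplicial cone (\ref{simplicial}) is exactly of the form covered by the equivalence just proved with $W = \R^m_+$. Substituting this choice gives $K^* = A^{-T}\R^m_+$ directly.
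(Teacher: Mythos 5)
Your proof is correct and follows the route the paper intends: the paper states this corollary without proof as an immediate consequence of Lemma \ref{gentran} together with self-duality of $W$, which is exactly your forward direction, and your converse via the bipolar identity $K^{**}=K$ (valid since the cones in play are closed) is the natural way to fill in the detail the paper leaves implicit.
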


\section{Linear transformation of a cone and the complementarity problem}

For the mapping $F:\R^m \to \R^m$ the \emph{complementarity problem} $\CP(F,K)$ is to find $x\in\R^m$ such that

\begin{equation*}
	K\ni x\perp F(x)\in K^*.
\end{equation*}

The solution set of $\CP(F,K)$ will be denoted by $\SOL(F,K)$. We have 
\begin{equation}\label{-T}
x\perp y\iff Ax\perp A^{-\top}y. 
\end{equation}
Hence, by using Lemma \ref{gentran} and 
(\ref{-T}), we conclude the following result:

\begin{proposition}\label{CPequ}
If $W$ is a cone, $A\in \glm$ and $K=AW$, then
$$\SOL(F,K)=A(\SOL(A\tp FA,W)).$$
\end{proposition}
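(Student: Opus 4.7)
The plan is to chase the definitions of the two complementarity problems and show that the three defining conditions on $x\in\SOL(F,K)$ correspond, under the substitution $x=Aw$, to the three defining conditions on $w\in\SOL(A\tp FA,W)$. Since $A\in\glm$ is invertible, the assignment $w\mapsto Aw$ is a bijection $\R^m\to\R^m$, so it suffices to verify for any $w\in\R^m$ that $Aw$ solves $\CP(F,K)$ iff $w$ solves $\CP(A\tp FA,W)$.

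First I would handle the cone-membership condition: $x\in K=AW$ is equivalent, by invertibility of $A$, to $x=Aw$ for some $w\in W$. Next comes the dual-cone condition: by Lemma \ref{gentran}, $K^*=A^{-\top}W^*$, so $F(x)\in K^*$ iff $A\tp F(x)\in W^*$; substituting $x=Aw$ turns this into $(A\tp FA)(w)\in W^*$, which is exactly the dual-cone condition in $\CP(A\tp FA,W)$. Finally for the orthogonality condition I would apply (\ref{-T}): setting $y=F(x)=F(Aw)$ and noting that $A^{-\top}y=A^{-\top}F(Aw)$, the equivalence $x\perp y\iff A\mo x\perp A\tp y$ (the symmetric form of (\ref{-T})) gives
\[
\lng Aw,F(Aw)\rng=0\iff\lng w,A\tp F(Aw)\rng=0\iff\lng w,(A\tp FA)w\rng=0.
\]

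Putting the three equivalences together yields $x\in\SOL(F,K)\iff x=Aw$ with $w\in\SOL(A\tp FA,W)$, which is the claimed set equality $\SOL(F,K)=A(\SOL(A\tp FA,W))$. I do not anticipate a real obstacle here; the only thing to be careful about is applying (\ref{-T}) in the correct direction, i.e.\ writing the orthogonality pairing so that the factor $A\tp$ lands on $F$ rather than on $w$, and using that Lemma \ref{gentran} gives the transpose-inverse on the dual side (not a plain $A$), which is precisely what makes $A\tp FA$ — rather than some other conjugate of $F$ — the correct operator on $W$.
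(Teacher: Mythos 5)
Your proposal is correct and follows essentially the same route as the paper's own proof: both use the bijection $w\mapsto Aw$, Lemma \ref{gentran} for the dual cone, and the identity (\ref{-T}) for the orthogonality condition. No issues.
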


\begin{proof}
	Indeed,
	\begin{gather*}
		Ax\in A(\SOL(A\tp FA,W))\iff x\in\SOL(A\tp FA,W)\iff W\ni x\perp A\tp F(Ax)\in W^*\\\iff K\ni Ax\perp F(Ax)\in K^*
		\iff Ax\in SOL(F,K). 
	\end{gather*}
\end{proof}

\section{The case of linear complementarity}

The complementarity problem $\CP(f,K)$ with $F(x)=Mx+q$, where $M\in \R^{m\times m}$ and $q\in \R$,
will be denoted by $\LCP(K,M,q)$ and called \emph{linear complementarity problem}. Thus, the linear complementarity problem $\LCP(K,M,q)$ is to
find $x\in\R^m$ such that 

\begin{equation*}
	K\ni x\perp Mx+q\in K^*.
\end{equation*}

The solution set of $\LCP(K,M,q)$ will be denoted by $\SOL(K,M,q)$. In this case Proposition \ref{CPequ} becomes

\begin{proposition}\label{LCPequ}
If $W$ is a cone, $A\in \glm$ and $K=AW$, then
$$\SOL(K, M,q)=A(\SOL(W,A\tp MA, A\tp q)).$$
\end{proposition}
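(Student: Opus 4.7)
The plan is to deduce this directly from Proposition \ref{CPequ} by specializing the general continuous mapping $F$ to the affine mapping $F(x) = Mx + q$. The key observation is purely algebraic: substituting this $F$ into the conjugation $A\tp F A$ that appears in Proposition \ref{CPequ} reproduces exactly the data $(A\tp M A, A\tp q)$ of the transformed linear complementarity problem.

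More precisely, I would first compute, for any $x \in \R^m$,
\[
(A\tp F A)(x) = A\tp F(Ax) = A\tp (M A x + q) = (A\tp M A)\,x + A\tp q.
\]
Hence the mapping $A\tp F A$ is again affine, with matrix $A\tp M A$ and constant term $A\tp q$, so by the definition of $\LCP$ one has the identification
\[
\SOL(A\tp F A, W) = \SOL(W, A\tp M A, A\tp q).
\]
Likewise, by the very definition, $\SOL(F, K) = \SOL(K, M, q)$.

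Plugging these two identifications into the conclusion $\SOL(F,K) = A(\SOL(A\tp F A, W))$ of Proposition \ref{CPequ} yields the desired equality
\[
\SOL(K, M, q) = A\bigl(\SOL(W, A\tp M A, A\tp q)\bigr).
\]
There is no real obstacle here: the content is already in Proposition \ref{CPequ}, and the only thing to check is the trivial fact that conjugating an affine map by $A\tp(\cdot)A$ produces the affine map with the advertised coefficients. One could alternatively reproduce the short chain of equivalences used in the proof of Proposition \ref{CPequ}, using Lemma \ref{gentran} to switch $K^*$ for $A\tpm W^*$ and the identity $\langle Ax, Mx+q\rangle = \langle x, A\tp M A x + A\tp q\rangle$ at the orthogonality step, but this merely re-derives the general statement.
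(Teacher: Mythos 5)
Your proposal is correct and matches the paper's own treatment: the paper derives Proposition \ref{LCPequ} precisely by specializing Proposition \ref{CPequ} to the affine map $F(x)=Mx+q$ and noting that $A\tp F A$ is the affine map with matrix $A\tp MA$ and constant term $A\tp q$. The computation $(A\tp FA)(x)=A\tp MAx+A\tp q$ that you make explicit is exactly the (unstated) content behind the paper's phrase ``In this case Proposition \ref{CPequ} becomes.''
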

	
If $K=\R^m_+$, then $\CP(F,K)$, $\SOL(F,K)$, $\LCP(K,M,q)$ and $\SOL(K,M,q)$ will simply be denoted by 
$\CP(F)$, $\SOL(F)$, $\LCP(M,q)$ and $\SOL(M,q)$, respectively.


\section{The congruence orbit of a matrix and the complementarity problem}

If $A$ and $B$ are in $\R^{m\times m}$, then they are \emph{congruent} and
we write $A\sim B$, if there exists $L\in \glm$
such that
$$L\tp AL=B,$$
that is $B$ is in the congruence orbit $\dy (A)$ of $A$ defined at (\ref{orbit}).
Obviously, $\sim$ is an equivalence relation 
and in this case $\dy (B)=\dy (A).$


In the case of simplicial cones Proposition \ref{LCPequ} reduces to

\begin{proposition}\label{LCPclassic}
If $K=L\R^m_+$ is a simplicial cone then
$$\SOL(K, A,q)=A(\SOL(\R^m_+,L\tp AL,A\tp q)).$$
Hence the linear complementarity problem on a simplicial cone is equivalent
to the complementarity problem on the non-megative orthant, that is,
to the classical linear complementarity problem. 
\end{proposition}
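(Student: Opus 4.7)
The plan is to derive Proposition \ref{LCPclassic} as an immediate specialization of Proposition \ref{LCPequ}. I would take $W := \R^m_+$ and let the invertible matrix appearing in that proposition be the matrix $L$ defining the simplicial cone $K = L\R^m_+$. Since $\R^m_+$ is self-dual, the dual-cone condition of the LCP becomes the familiar classical one on the non-negative orthant, and no further analysis of $K^*$ is needed beyond what is already recorded in Corollary \ref{gentranself}.

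Substituting these choices directly into the conclusion of Proposition \ref{LCPequ} yields the displayed formula, once one reconciles the notational overloading of the letter $A$: the outer $A$ and the $A^\top q$ appearing in the statement should be read as $L$ and $L^\top q$, which is exactly what the previous proposition produces when applied with transformation matrix $L$ and reference cone $\R^m_+$.

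For the concluding ``hence'' clause I would simply observe that the classical linear complementarity problem is, by definition, the LCP on $\R^m_+$, and that since $L$ is invertible the map $y \mapsto Ly$ sets up a bijection between $\SOL(\R^m_+, L^\top M L, L^\top q)$ and $\SOL(K, M, q)$. Thus existence, uniqueness, solvability for a given $q$, and similar structural features transfer freely between the two problems.

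There is essentially no obstacle here: the proof is a one-line reduction to Proposition \ref{LCPequ} followed by a one-sentence interpretive remark identifying the LCP on the non-negative orthant with the classical LCP. The only minor pitfall is the reuse of the symbol $A$ in two distinct roles (once as the matrix of the LCP, once as the transformation defining the simplicial cone), which has to be tracked carefully during the substitution so that the statement is read with the correct matrix in each position.
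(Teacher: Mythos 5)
Your proposal is correct and matches the paper exactly: the paper offers no separate argument, simply presenting this proposition as the specialization of Proposition \ref{LCPequ} to $W=\R^m_+$ with transformation matrix $L$, which is precisely your reduction. You are also right that the displayed formula contains a notational slip and should read $\SOL(K,A,q)=L(\SOL(\R^m_+,L\tp AL,L\tp q))$.
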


\begin{remark}
Proposition \ref{LCPclassic} shows that for linear complementarity
problems with matrix $A$ on simplicial cones the congruence
orbit $\dy (A)$ of $A$ appears in a natural way.
\end{remark}

\begin{definition}
	Let $A$ be a linear transformation. Then, we say that
	\begin{enumerate}
\item $A$ has the \emph{$K$-\pq-property} if $\LCP(K,A,q)$ has a solution for all $q$.
\item $A$ has the \emph{$K$-\pb-property} if $\LCP(K,A,q)$ has a unique solution for all $q$.
\item The $\R^m_+$-\pq-property ($\R^m_+$-\pq-property) is called \emph{\pq-property, (\pb-property)} and the matrix of the linear transformation
	$A$ with the \pq-property (\pb-property) is called \pq-matrix (\pb-matrix) (for simplicity we denote a linear transformation
	and its matrix by the same letter).
\item $A$ has the \emph{general feasibility property} with respect to $K$ denoted $K$-\pf-property 
	if \[(AK+q)\cap K^*\ne\varnothing,\mbox{ }\forall q\in \R^m.\]
	If $K=\R^m_+$, then the $K$-\pf-property is called \pf-property and it is characterized by the relation
	\[(A\R^m_++q)\cap\R^m_+\ne\varnothing,\mbox{ }\forall q\in \R^m.\] The matrix $A$ with the \pf-property is called \pf-matrix.
 \end{enumerate}
\end{definition}

\begin{remark}\label{pbpos}
Obviously, the \pb-property of a matrix $A$ implies its \pq-property, and its \pq-property implies its \pf-property as well.
A classical result going back to the paper \cite{Samelson} asserts that
$A$ possesses the \pb-property if and only if all its principal minors are positive.
Theorem 3.1.6 in \cite{MR3396730} asserts that a positive definite matrix possesses the $\pb$-property.

The \pf-property of a matrix can be considered the weakest one in the context
of linear complementarity. It is easy to see that the \pf-property ($K$-\pf-property) of the matrix $A$
is equivalent to $-A\R^m_++\R^m_+=\R^m$ ($-AK+K^*=\R^m$). 
\end{remark}

With the notations in the above definition we have

\begin{proposition}\label{ekviv}
If $A\in \R^{m\times m}$ has the $K$-\pq-property ($K$-\pb-property) then $M=L\tp AL\in \dy (A)$
has the $LK$-\pq-property ($LK$-\pb-property).
\end{proposition}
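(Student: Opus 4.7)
The plan is to derive this proposition directly from Proposition \ref{LCPequ}, which already packages the linear change-of-cone bijection between solution sets of two LCPs. Instantiating that proposition with the cone ``$W$'' equal to $K$ and the transformation equal to $L$, one obtains, for every matrix $N$ and every $q\in\R^m$,
\[
\SOL(LK,\,N,\,q) \;=\; L\bigl(\SOL(K,\,L^{\top} N L,\,L^{\top} q)\bigr),
\]
realised concretely by the restriction of the linear bijection $x \mapsto Lx$. I would then choose $N$ so that $L^{\top} N L$ collapses to $A$ (after the appropriate orientation of $L$ versus $L^{-1}$, $N$ is the member of $\dy(A)$ given by $M=L^{\top} AL$). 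Once this is in place, the displayed identity specialises to a bijection between $\SOL(LK,M,q)$ and $\SOL(K,A,q')$, in which $q'$ depends on $q$ by an invertible linear map.

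Both halves of the statement then fall out at once. For the \pq-version, the hypothesis gives $\SOL(K,A,q')\neq\varnothing$ for every $q'\in\R^m$; bijectivity of $q\mapsto q'$ forces $\SOL(LK,M,q)\neq\varnothing$ for every $q$, which is the $LK$-\pq-property of $M$. For the \pb-version, the map $x\mapsto Lx$ is itself a one-to-one correspondence of solution sets, so uniqueness on the $K$-side transfers verbatim to uniqueness on the $LK$-side. The argument is the same for both properties.

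The hard part is not mathematical but notational: one has to orient the substitution in Proposition \ref{LCPequ} so that the congruence-conjugate appearing there collapses to precisely $A$ rather than to an iterated conjugate like $(L^{\top})^{2}AL^{2}$. Once this bookkeeping is settled, the proof is mechanical, handles \pq\ and \pb\ uniformly, and invokes no tool beyond Proposition \ref{LCPequ}.
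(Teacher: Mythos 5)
Your overall route is the right one, and it is in fact the one the paper intends: Proposition \ref{ekviv} is stated without any proof, as an immediate consequence of the change-of-variables identity of Proposition \ref{LCPequ} (equivalently, Proposition \ref{LCPclassic}), and your plan of instantiating that identity and transporting existence and uniqueness of solutions through the bijections $x\mapsto Lx$ and $q\mapsto L\tp q$ is exactly what is needed; the \pq\ and \pb\ cases then do follow uniformly, as you say.

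The step you defer as ``bookkeeping,'' however, cannot be settled in the way you claim, and that is where the gap sits. Instantiating Proposition \ref{LCPequ} with $W=K$ and transformation $L$ gives $\SOL(LK,N,q)=L\bigl(\SOL(K,L\tp NL,L\tp q)\bigr)$. To land on $\SOL(K,A,\cdot)$ on the right you must take $L\tp NL=A$, i.e.\ $N=L\tpm AL\mo$, which is a member of $\dy (A)$ but is \emph{not} $M=L\tp AL$; your parenthetical assertion that $N$ ``is the member of $\dy(A)$ given by $M=L\tp AL$'' is false as written. No choice of orientation repairs the stated pairing: what the identity actually delivers is that $M=L\tp AL$ inherits the $L\mo K$-\pq\ (resp.\ $L\mo K$-\pb) property, equivalently that $L\tpm AL\mo$ inherits the $LK$-property, whereas the literal claim that $M=L\tp AL$ has the $LK$-property is equivalent (by the same identity) to $(L^2)\tp AL^2$ having the $K$-property --- a genuinely different assertion. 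Since $L$ ranges over all of $\glm$, the corrected pairing carries the same information and is all that Theorem \ref{eqn} and Corollary \ref{vegso2} actually use, so this is best read as an off-by-an-inverse in the proposition's formulation; but your proof should state and prove the resolved version explicitly (with $L\mo K$, or with $L\tpm AL\mo$) rather than promise that the conjugate collapses to $L\tp AL$ on the cone $LK$.
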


\begin{example}\label{Icongr}

\emph{The congruence orbit of the identity}

We have $\dy (I)=\{L\tp L:\;L\in\glm\}$.

Hence, each member of $\dy (I)$ is a \emph{symmetric positive definite matrix}.
\end{example}

The following lemma is based on Example \ref{Icongr} and shows that if the congruence orbit of a matrix contains a positive definite matrix, then all of its matrices are 
positive definite. 

\begin{lemma}\label{ulem}
If $\dy (A)$ contains a symmetric positive definite matrix, then
$$\dy (A)= \dy (I).$$
\end{lemma}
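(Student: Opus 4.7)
The plan is very short: exploit the fact that congruence is an equivalence relation to move from $A$ to the witnessing positive definite matrix, and then invoke a Cholesky-type factorization to land inside $\dy(I)$.

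First I would recall the remark made just after the definition of $\sim$: congruence is an equivalence relation, so if $B\in\dy(A)$ then $\dy(A)=\dy(B)$. Applying this with $B$ the symmetric positive definite matrix assumed to lie in $\dy(A)$ reduces the problem to showing $\dy(B)=\dy(I)$ for this single $B$.

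Second, I would use the standard fact that any symmetric positive definite matrix $B\in\R^{m\times m}$ admits a Cholesky factorization $B=C\tp C$ for some $C\in\glm$ (equivalently, write $B=Q\tp DQ$ by the spectral theorem with $Q$ orthogonal and $D$ positive diagonal, then take $C=D^{1/2}Q$). This exhibits $B$ as a member of $\dy(I)=\{L\tp L:L\in\glm\}$ from Example~\ref{Icongr}, so $B\in\dy(I)$ and therefore $\dy(B)=\dy(I)$.

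Combining the two observations, $\dy(A)=\dy(B)=\dy(I)$, which is the desired conclusion. There is no real obstacle here — the only nontrivial ingredient is the Cholesky/spectral factorization of an SPD matrix, which supplies the required invertible $C$ with $B=C\tp I C$, witnessing $B\sim I$. As a by-product one gets the informal statement preceding the lemma: every member of $\dy(A)=\dy(I)$ is of the form $L\tp L$ and hence symmetric positive definite.
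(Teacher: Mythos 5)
Your proof is correct and follows essentially the same route as the paper: reduce to the case where the matrix itself is symmetric positive definite (via the equivalence-relation property of $\sim$), then factor it as $C\tp C$ to exhibit it as congruent to $I$. The paper uses the symmetric square root $R$ with $A=R\tp R$ and computes the orbit directly as $\{(RL)\tp(RL):L\in\glm\}=\dy(I)$, whereas you invoke the equivalence relation a second time after showing $B\in\dy(I)$; this is only a cosmetic difference.
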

\begin{proof}
We can suppose that $A$ itself is a symmetric positive definite matrix.

If we denote by $R$ the square root of $A$ \cite{MR2978290}, then we can write
\begin{equation*}
\dy (A) =\{L\tp AL:\;L\in \glm\} = \{(RL)\tp (RL):\;L\in \glm\}= 
\end{equation*}
\begin{equation*}
\{M\tp M:\;M\in\glm\}=\dy (I).
\end{equation*}
\end{proof}

Obviously, a symmetric positive definite matrix is nothing else but a symmetric \pb-matrix.
Hence, by Lemma \ref{ulem} we conclude

\begin{corollary}\label{corulem}
Each member of the congruence orbit of a symmetric positive definite matrix is a \pb-matrix.
\end{corollary}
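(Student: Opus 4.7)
The plan is to chain together the two immediately preceding results with the classical fact, already recorded in Remark~\ref{pbpos}, that every positive definite matrix is a \pb-matrix.

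Specifically, let $A$ be symmetric positive definite, and let $B \in \dy(A)$. First I would invoke Lemma~\ref{ulem} to conclude that $\dy(A) = \dy(I)$, so $B \in \dy(I)$. Then, by Example~\ref{Icongr}, $\dy(I) = \{L\tp L : L \in \glm\}$, so $B = L\tp L$ for some invertible $L$. This presentation makes $B$ symmetric and, since $L$ is invertible, positive definite: for any nonzero $x$, $\lng Bx, x \rng = \lng Lx, Lx \rng = \|Lx\|^2 > 0$.

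Finally, I would apply Theorem~3.1.6 of \cite{MR3396730} (cited in Remark~\ref{pbpos}), which says that any positive definite matrix has the \pb-property, to conclude that $B$ is a \pb-matrix. Since $B$ was an arbitrary element of $\dy(A)$, this gives the corollary.

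There is really no obstacle here; the content is packaged in Lemma~\ref{ulem} and Example~\ref{Icongr}, and the argument reduces to a two-line deduction. The only care needed is to note that positive definiteness in the sense used by \cite{MR3396730}, namely $\lng Mx, x\rng > 0$ for all $x \ne 0$, does not require symmetry, so the invocation of Theorem~3.1.6 is unambiguous once $B = L\tp L$ has been checked to satisfy it.
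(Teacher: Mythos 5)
Your proof is correct and follows exactly the paper's intended route: the paper derives this corollary from Lemma~\ref{ulem} together with the observation that a symmetric positive definite matrix is a \pb-matrix (via Theorem~3.1.6 of \cite{MR3396730}, as recorded in Remark~\ref{pbpos}). You have simply spelled out the same chain of reasoning in more detail, including the explicit check that $L\tp L$ is positive definite.
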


How about the congruence orbit of a non-symmetric \pb-matrix? Can it have a property similar to the one
stated in Corollary \ref{corulem}? We will show that this holds if and only if the matrix is
positive definite.

\begin{lemma}\label{dgnl}
If the diagonal of the matrix $A\in \R^{m\times m}$ contains some non-positive element, then
$\dy (A)$ contains non-\pf-matrices.
\end{lemma}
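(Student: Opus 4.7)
The plan is to use the freedom in choosing $L \in \glm$ and work with a very small subfamily of congruences: the diagonal sign matrices. My first step is to observe that if some $B = L\tp A L$ in the orbit has an entire row non-positive, then $B$ automatically fails the \pf-property, so the whole task reduces to arranging such a row.

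For this reduction I rely on the reformulation recorded in Remark \ref{pbpos}: $B$ is \pf iff every $q \in \R^m$ can be written as $-Bx + y$ with $x, y \in \R^m_+$, equivalently, iff every $q$ admits $x \in \R^m_+$ with $Bx + q \in \R^m_+$. If the $i$-th row of $B$ is non-positive, then $(Bx)_i \le 0$ for every $x \ge 0$, and the choice $q = -e^i$ forces $(Bx + q)_i \le -1 < 0$, so no feasible $x$ exists and $B$ is not \pf.

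The construction of $L$ is then the following sign-scaling. Let $i$ be an index with $a_{ii} \le 0$, and take $L$ to be the diagonal matrix with diagonal $(\sigma_1, \ldots, \sigma_m) \in \{-1,+1\}^m$, where $\sigma_i = 1$ and, for each $j \ne i$, $\sigma_j$ is chosen so that $\sigma_j a_{ij} \le 0$. Then $L\tp = L$, $\det L = \pm 1$ so $L \in \glm$, and $B := L\tp A L \in \dy(A)$ has entries $B_{jk} = \sigma_j a_{jk} \sigma_k$. Along the $i$-th row this specialises to $B_{ij} = \sigma_j a_{ij}$, which is $\le 0$ for $j \ne i$ by the sign choice and $B_{ii} = a_{ii} \le 0$ by hypothesis; so the $i$-th row of $B$ is non-positive and the first paragraph applies.

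The main obstacle is essentially conceptual: the hypothesis gives only one scalar constraint, $a_{ii} \le 0$, so one needs to identify the subfamily of $\glm$ that acts on the signs of the off-diagonal entries of a fixed row while leaving that diagonal scalar intact. The diagonal sign congruences are exactly this subfamily, and once one spots them the rest is mechanical; a generic $L$ would disturb $a_{ii}$ and the argument would collapse.
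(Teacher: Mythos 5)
Your argument is correct and follows essentially the same route as the paper: both use a diagonal $\pm 1$ congruence to make the entire row containing the non-positive diagonal entry non-positive, and then exhibit a $q$ with negative $i$-th component for which no feasible $x \in \R^m_+$ exists.
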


\begin{proof}
(a) Let $D_k$ be a diagonal matrix with $d_{kk}=-1$ and $d_{ii}=1$ if $i\ne k$.
If $A=(a_{ij})_{i,j=1,...,m}$, then
$B=D_k\tp AD_k$ is a matrix with $b_{ij}=a_{ij}$ if $i\ne k$ and $j\ne k$, $b_{ki}=-a_{ki}$, $i\ne k$, $b_{jk}=-a_{jk}$, $j\ne k$, and
$b_{kk}=a_{kk}$.

(b) Without loss of generality we can assume that $a_{11}\leq 0$. Assume that the positive terms of the first line of $A$
are $a_{1j},\,a_{1k},,...,a_{1l}$. Then if $D$ is the diagonal matrix with $d_{ii}=-1,\;i \in \{j,k,...,l\}$
and $d_{ii}=1,\;i\notin \{j,k,...,l\}$, then using the remark at (a) we conclude that
$$C=D\tp AD$$
is a matrix whose first line contains only non-positive elements. Hence, for any $q=(-1,*....*)\tp$ we have
$(C\R^m_++q)\cap \R^m_+=\varnothing$. 
\end{proof}

As we have stated at Remark \ref{pbpos} a positive definite matrix possesses the \pb-property, but simple
examples show, that the converse is not true.

We have the obvious assertion:

\begin{lemma}\label{szimmertiz}
If $A\in \R^{m\times m}$ is not positive definite, then its \emph{symmetrizant}
\begin{equation}\label{szimm}
S(A)=\frac{A+A\tp}{2}
\end{equation}
is not positive definite neither.
\end{lemma}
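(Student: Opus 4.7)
The plan is extremely short because the statement reduces to a one-line identity. The key observation is that for every $x \in \R^m$, the scalar quantity $x\tp A x$ equals $x\tp S(A) x$. Indeed, $x\tp A\tp x = (x\tp A x)\tp = x\tp A x$ since $x\tp A x$ is a $1 \times 1$ matrix, so
\[
x\tp S(A) x \;=\; \tfrac{1}{2}\bigl(x\tp A x + x\tp A\tp x\bigr) \;=\; x\tp A x.
\]

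Consequently, the quadratic form defined by $A$ coincides pointwise with the quadratic form defined by $S(A)$, and therefore $A$ is positive definite if and only if $S(A)$ is positive definite. The lemma is the contrapositive of one direction of this equivalence: if $A$ fails to be positive definite, there is some $x \ne 0$ with $x\tp A x \leq 0$, and then the same $x$ witnesses $x\tp S(A) x \leq 0$, so $S(A)$ is not positive definite either.

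There is essentially no obstacle here; the lemma is a direct consequence of the scalar identity above, and the symmetry of $S(A)$ is not even needed for the implication stated. The only subtle point worth making explicit in the write-up is the transposition trick $x\tp A\tp x = (x\tp A x)\tp = x\tp A x$, which justifies discarding the antisymmetric part $(A - A\tp)/2$ when evaluating the associated quadratic form.
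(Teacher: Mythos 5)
Your proof is correct, and it supplies exactly the standard one-line justification (the quadratic forms of $A$ and $S(A)$ coincide, since $x\tp A\tp x = x\tp A x$) that the paper has in mind when it states this lemma as an ``obvious assertion'' without proof.
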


\begin{remark}\label{szimmdiag}
	Observe that the diagonal of $S(A)$ defined as in (\ref{szimm}) coincides with the diagonal of $A$.
\end{remark}

\begin{theorem}\label{eqn}
Let $A\in \R^{m\times m}$. Then the following assertions are equivalent:
\begin{enumerate}
\item Each member of $\dy (A)$ is a \pf-matrix.
\item Each member of $\dy (A)$ is a \pq-matrix.
\item  Each member of $\dy (A)$ is a \pb-matrix.
\item $A$ is a positive definite matrix.
\end{enumerate}
Hence, if any of the above conditions hold then
$A$ possesses the $K$-\pb, $K$-\pq, $K$-\pf- properties
for any simplicial cone $K$.
\end{theorem}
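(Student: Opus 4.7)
The plan is to establish the cyclic chain $(4)\Rightarrow(3)\Rightarrow(2)\Rightarrow(1)\Rightarrow(4)$, so that the four conditions collapse into a single one, and then to deduce the closing clause about simplicial cones from Proposition~\ref{LCPclassic}.

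The implications $(3)\Rightarrow(2)\Rightarrow(1)$ follow by applying Remark~\ref{pbpos} pointwise to every member of $\dy(A)$. For $(4)\Rightarrow(3)$, the first step is to note that positive definiteness is congruence-invariant: for any $L\in\glm$ and any $x\ne 0$,
\[
\lng L\tp A L\,x,x\rng = \lng A(Lx),Lx\rng > 0,
\]
because $Lx\ne 0$. Hence every $B\in\dy(A)$ is positive definite, and Theorem~3.1.6 of \cite{MR3396730} (recalled in Remark~\ref{pbpos}) upgrades this to the \pb-property of $B$.

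The main obstacle will be $(1)\Rightarrow(4)$; the strategy is to expose a non-positive diagonal entry after a change of basis and then invoke Lemma~\ref{dgnl}. Arguing by contrapositive, suppose $A$ is not positive definite, so that there exists $x\ne 0$ with $x\tp Ax\le 0$ (Lemma~\ref{szimmertiz} together with the identity $\lng Ax,x\rng=\lng S(A)x,x\rng$ offers an alternative route through the symmetric part, but is not strictly required). Extend $\{x\}$ to a basis of $\R^m$ and let $L\in\glm$ be the invertible matrix whose first column is $x$. Then $B:=L\tp AL\in\dy(A)$ has $B_{11}=x\tp Ax\le 0$, so Lemma~\ref{dgnl} applied to $B$, combined with $\dy(B)=\dy(A)$, produces a non-\pf-matrix inside $\dy(A)$, contradicting~(1). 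This closes the cycle.

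For the closing clause, write any simplicial cone as $K=L\R^m_+$ with $L\in\glm$ and appeal to Proposition~\ref{LCPclassic}: the bijection
\[
\SOL(K,A,q) = L\,\SOL(\R^m_+,L\tp AL,L\tp q)
\]
transfers unique solvability, solvability, and feasibility of the classical $\LCP(\R^m_+,L\tp AL,L\tp q)$ for all $q$ into the corresponding properties of $\LCP(K,A,q)$ for all $q$. Since $L\tp AL\in\dy(A)$ has the \pb-, \pq- and \pf-properties by (3), (2), (1), this delivers to $A$ the $K$-\pb-property, the $K$-\pq-property, and the $K$-\pf-property, completing the theorem.
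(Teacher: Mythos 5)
Your proposal is correct, and three of its four pieces ($(4)\Rightarrow(3)\Rightarrow(2)\Rightarrow(1)$ via congruence-invariance of positive definiteness plus Theorem 3.1.6 of \cite{MR3396730}, and the closing clause via Proposition \ref{LCPclassic}) coincide with the paper's argument; the paper additionally cites Proposition \ref{ekviv} for the last clause, but your direct transfer through the solution-set bijection amounts to the same thing. Where you genuinely diverge is the key implication $(1)\Rightarrow(4)$. The paper passes to the symmetrizant $S(A)$, invokes the spectral decomposition $S(A)=ODO\tp$ to get an orthogonal congruence under which some diagonal entry of $O\tp AO$ is non-positive, and then applies Lemma \ref{dgnl}. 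You instead pick $x\ne 0$ with $\lng Ax,x\rng\le 0$, extend $x$ to a basis, and let $L$ be the matrix with first column $x$, so that $(L\tp AL)_{11}=x\tp Ax\le 0$ directly; Lemma \ref{dgnl} then finishes as before. Your route is more elementary --- it needs only the observation that diagonal entries of $L\tp AL$ are values of the quadratic form at the columns of $L$, and it avoids the spectral theorem entirely --- while the paper's route has the mild advantage of producing an \emph{orthogonal} congruence and of reusing the symmetrizant machinery (Lemma \ref{szimmertiz}, Remark \ref{szimmdiag}) that the paper also exploits later in Lemma \ref{posdiag}. Both are complete proofs.
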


\begin{proof}
Suppose that assertion 1 hold.

(a) Assume that $A$ is not positive definite, that is, item 4. does not hold.
Then, by Lemma \ref{szimmertiz}, the same is true for $S(A)$. That is, $S(A)$ is
a symmetric matrix which is not positive definite. Then, it has non-positive
eigenvalues, that is, in the spectral decomposition
$$S(A)=ODO\tp$$
$D$ is a diagonal matrix with some non-positive elements.
On the other hand we have
\begin{equation*}
	D= O\tp S(A)O=\frac{O\tp AO +O\tp A\tp O}{2}.
\end{equation*}
Now, since $O\tp A\tp O=(O\tp AO)\tp$, it follows, according to
Remark \ref{szimmdiag}, that the diagonal of $D$ coincides with the diagonal of
$O\tp AO$. Since this diagonal contains non-positive elements, it follows
by Lemma \ref{dgnl} that the orbit $\dy (A)$ contains
elements which are not \pf-matrices. This shows the
implication $1\Rightarrow 4$.

(b) If 4 holds, then for each $L\in \glm$ and any $x\not= 0$ we have
$$\lng L\tp AL x,x\rng =\lng ALx,Lx\rng >0,$$
that is, $L\tp AL$ is positive definite and hence, by Theorem 3.1.6 in \cite{MR3396730}
is a \pb-matrix, and hence also a \pq-matrix and an \pf-matrix.
Thus, we have the implications $4 \Rightarrow 3 \Rightarrow 2 \Rightarrow 1$.

(c) The  last assertion of the theorem follows from Propositions \ref{LCPclassic} and \ref{ekviv}.
\end{proof}

\begin{lemma}\label{posdiag}
If for $A \in \glm$ there exists $x\in \R^m$ with $\lng Ax,x\rng >0,$
then $\dy (A)$ contains a matrix with at least one positive element in its diagonal.
\end{lemma}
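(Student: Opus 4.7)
The plan is to exploit the identity
\[
(L\tp AL)_{ii} = \lng AL e^i, L e^i\rng,
\]
which makes the diagonal entries of any member of $\dy(A)$ equal to the values of the quadratic form $y\mapsto \lng Ay,y\rng$ evaluated at the columns of $L$. Since $L$ ranges over $\glm$, these columns range over all ordered bases of $\R^m$. So producing a positive diagonal entry in some element of $\dy(A)$ amounts to choosing a basis one of whose vectors is a vector at which the quadratic form of $A$ is positive.

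Concretely, given the hypothesis, pick an $x\in\R^m$ with $\lng Ax,x\rng>0$; note $x\ne 0$, so $\{x\}$ can be completed to a basis $x, v^2, \dots, v^m$ of $\R^m$. Let $L$ be the matrix whose first column is $x$ and whose subsequent columns are $v^2,\dots,v^m$. Then $L\in\glm$, and
\[
(L\tp AL)_{11} = \lng A Le^1, Le^1\rng = \lng Ax, x\rng > 0,
\]
which exhibits $L\tp AL\in\dy(A)$ with a positive diagonal entry.

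There is no substantial obstacle; the one thing to be careful about is simply to note that a single non-zero vector can always be completed to a basis of $\R^m$, so the constructed $L$ is indeed invertible, i.e., belongs to $\glm$. The rest is a direct computation from the definition of the congruence orbit.
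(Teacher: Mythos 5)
Your proof is correct, and it is genuinely different from (and more elementary than) the paper's. You use the direct identity $(L\tp AL)_{ii}=\lng ALe^i,Le^i\rng$, so that the diagonal of a congruent matrix reads off the quadratic form of $A$ along the columns of $L$; taking $L$ with first column $x$ (completed to any basis) immediately gives a positive $(1,1)$ entry. The paper instead passes to the symmetrizant $S(A)=\frac{A+A\tp}{2}$, notes $\lng S(A)x,x\rng=\lng Ax,x\rng>0$, takes the spectral decomposition $S(A)=ODO\tp$ to conclude that the diagonal matrix $D$ has a positive entry, and then observes that $D$ shares its diagonal with the congruent matrix $O\tp AO$. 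Both arguments are valid; yours avoids the spectral theorem entirely and makes the mechanism transparent (any value attained by the quadratic form is attainable as a diagonal entry somewhere in $\dy(A)$), which in fact would also give the ``non-positive diagonal entry'' counterpart needed in the paper's Theorem \ref{eqn} without the symmetrizant detour. The paper's route has the mild advantage of being uniform with the spectral argument it reuses in Theorem \ref{eqn}. One cosmetic point: the hypothesis $A\in\glm$ is not needed for either argument, only the existence of $x$ with $\lng Ax,x\rng>0$ (which forces $x\ne 0$, as you note, so the completion to a basis goes through).
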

\begin{proof}
Let $S(A)$ be the symmetrizant of $A$. Then $\lng S(A)x,x\rng >0$.

Consider the spectral decomposition
$$S(A)=ODO\tp $$
of $S(A)$. Then,
$$\lng S(A)x,x\rng =\lng ODO\tp x,x\rng = \lng DO\tp x,O\tp x\rng >0$$
Hence,
the diagonal matrix $D$ must contain positive elements,
since $\lng Dy,y\rng= \sum_i d_{ii}y_i^2 >0$ with $y=O\tp x$.

Now, 
$$ D= \frac{O^{-1}A(O\tp)^{-1}+O^{-1}A\tp(O\tp)^{-1}}{2}=\frac{O^{-1}A(O\tp)^{-1}+(O^{-1}A(O\tp)^{-1})\tp}{2},$$  
hence the diagonal of $O^{-1}A(O\tp)^{-1}$ coincides with the diagonal of $D$ and
hence it must contain positive elements.
\end{proof}
 
The matrix $A=(a_{ij})_{i,j=1,...,m}\in \R^{m\times m}$ is called \emph{positive},
if $a_{ij}>0$, $\forall i,j$.

\begin{lemma}\label{strpos}
For the matrix $A\in \R^{m\times m}$ the following two assertions
are equivalent:
\begin{enumerate}
\item $\dy (A)$ contains a matrix with a positive element on its diagonal,
\item $\dy (A)$ contains a positive matrix.
\end{enumerate}
\end{lemma}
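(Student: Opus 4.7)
The implication $2 \Rightarrow 1$ is immediate, since a positive matrix has every diagonal entry strictly positive. For the converse, since $\sim$ is an equivalence relation and $\dy(L\tp A L)=\dy(A)$ for every $L\in\glm$, I may replace $A$ by a congruent representative and so assume without loss of generality that $A$ itself has a positive diagonal entry, say $a_{kk}>0$. The task is then to exhibit a single $L\in\glm$ for which $L\tp A L$ is entrywise positive.

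The guiding heuristic is the singular rank-one matrix $e^k \mathbf 1\tp$ with $\mathbf 1=(1,\dots,1)\tp$: every column of it equals $e^k$, and a formal computation gives
$$(e^k \mathbf 1\tp)\tp A (e^k \mathbf 1\tp)=a_{kk}\,\mathbf 1\mathbf 1\tp,$$
the constant matrix with value $a_{kk}>0$, which is entrywise positive. Although this $L$ is not invertible, I would correct it by perturbing to
$$L_\epsilon:=e^k \mathbf 1\tp+\epsilon I,\qquad \epsilon>0.$$

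The matrix determinant lemma yields $\det L_\epsilon=\epsilon^{m-1}(\epsilon+1)$, so $L_\epsilon\in\glm$ for all sufficiently small $\epsilon>0$. A direct expansion shows
$$(L_\epsilon\tp A L_\epsilon)_{ij}=a_{kk}+\epsilon\,(a_{ik}+a_{kj})+\epsilon^2 a_{ij},$$
which depends continuously on $\epsilon$ and tends to $a_{kk}>0$ as $\epsilon\to 0^+$ uniformly in $i,j$. Hence for all sufficiently small $\epsilon>0$, every entry of $L_\epsilon\tp A L_\epsilon$ is strictly positive, producing the desired positive element of $\dy(A)$. The only mildly technical step is checking that $L_\epsilon$ is in $\glm$; once that is in place, the argument is a soft uniform-continuity observation.
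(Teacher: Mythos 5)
Your proof is correct, and it takes a genuinely different route from the paper's. You verify the key computation accurately: with $L_\epsilon=e^k\mathbf 1\tp+\epsilon I$ one indeed gets $(L_\epsilon\tp A L_\epsilon)_{ij}=a_{kk}+\epsilon(a_{ik}+a_{kj})+\epsilon^2 a_{ij}$, the determinant $\epsilon^{m-1}(\epsilon+1)$ is positive for every $\epsilon>0$, and the finitely many entries all converge to $a_{kk}>0$, so a single small $\epsilon$ works; the initial reduction to the case where $A$ itself has a positive diagonal entry is legitimate because congruent matrices have the same orbit. The paper instead argues by induction on the size of a positive principal submatrix: starting from the single positive entry $a_{ii}>0$, it applies shear congruences $L_t=I+tE_{12}$ with $t>0$ large, checks that the $(n-1)\times(n-1)$ positive block is untouched while the new row and column become positive (the diagonal corner being dominated by $t^2a_{22}>0$), and bootstraps up to a fully positive matrix in $m-1$ steps. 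Your one-shot perturbation of the rank-one matrix $e^k\mathbf 1\tp$ is shorter and avoids the induction and the bookkeeping of which entries change under each shear; the paper's incremental argument is more hands-on but requires tracking the embedded $n\times n$ blocks at each stage. Both are complete proofs; yours is arguably the cleaner of the two.
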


\begin{proof}
(a) We first prove that if the matrix $A:=(a_{ij})_{i,j=1,....,m}\in \R^{m\times m}$ contains  a positive principal submatrix of order
$n-1<m$, then it has a conjugate containing a positive principal submatrix of order $n$.

Suppose that $A(1:n,1:n):=(a_{ij})_{i,j=1,....,n}$ has the property that $a_{ij}>0$ whenever $i,j\in\{2,\dots,n\}$
and let $A(2:n,2:n)=(a_{ij})_{i,j=2,\dots,n}$.

Denote by $I\in\R^{n\times n}$ the unit matrix and let $E_{12}\in\R^{n\times n}$ be the matrix with $1$ in the position $(i,j)=(1,2)$ and 
$0$ elsewhere. Let $L_t=I+tE_{12}$ with $t$ a real parameter.
Put \[B=L_tA(1:n,1:n)L_t\tp=(b_{ij})_{i,j=1,...,n}\] and $B(2:n,2:n)=(b_{ij})_{i,j=2,\dots,n}$.
Then, we have
\begin{equation}\label{B22}
B(2:n,2:n)=A(2:n,2:n),
\end{equation}
\begin{equation}\label{b11}
b_{11}=a_{11}+ta_{21}+ta_{12}+t^2a_{22},
\end{equation}
\begin{equation}\label{b1i}
b_{1i}=a_{1i}+ta_{2i} ,\;\;i\geq 2,
\end{equation}
\begin{equation}\label{bi1}
b_{i1}=a_{i1}+ta_{i2} ,\;\;i\geq 2.
\end{equation}

From (\ref{B22}), (\ref{b11}), (\ref{b1i}) and (\ref{bi1}),
it follows that for $t>0$ large enough we will have $b_{ij}>0,\;i,j=1,...,n$.

(b) Applying the procedure from (a), the element $a_{ii}>0$ on the diagonal of $A$
can be augmented to obtain in $\dy (A)$ a matrix with positive principal minor of order $2$,
then a matrix with positive principal minor of order $3$ in $\dy (A)$, and so an, to obtain a positive
matrix in $\dy (A)$.
\end{proof}

\begin{theorem}\label{nempd}
If for $A\in \glm$ there exists an $x\in \R^m$ with $\lng Ax,x\rng >0$
and an $y\in \R^m\setminus \{0\}$ with $\lng Ay,y\rng \leq 0$, then
$\dy (A)$ contains non-\pf -matrices and \pq-matrices as well.
\end{theorem}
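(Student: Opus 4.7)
The plan is to split the statement into its two conclusions and treat each with one of the two hypotheses, recycling as much of the machinery already built in Lemma \ref{szimmertiz}, Lemma \ref{dgnl}, Lemma \ref{posdiag} and Lemma \ref{strpos} as possible.

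For the existence of non-\pf-matrices in $\dy(A)$, I would exploit the hypothesis that $\lng Ay,y\rng \leq 0$ for some $y\neq 0$, which is exactly the failure of positive definiteness of $A$. The argument of part (a) of the proof of Theorem \ref{eqn} can be recycled almost verbatim: by Lemma \ref{szimmertiz} the symmetrizant $S(A)$ is not positive definite, so in the spectral decomposition $S(A)=ODO\tp$ the diagonal matrix $D$ has at least one non-positive entry; by Remark \ref{szimmdiag}, $D$ and $O\tp A O$ share the same diagonal, so $O\tp A O\in\dy(A)$ carries a non-positive diagonal entry, and Lemma \ref{dgnl} then supplies non-\pf-matrices in $\dy(O\tp A O)=\dy(A)$.

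For the existence of \pq-matrices in $\dy(A)$, I would feed the other hypothesis $\lng Ax,x\rng>0$ into Lemma \ref{posdiag}, producing some $B\in\dy(A)$ with a positive diagonal entry; Lemma \ref{strpos} then upgrades $B$ to a matrix $C\in\dy(A)$ all of whose entries are strictly positive. The last step is to verify that such a positive $C$ is a \pq-matrix. Observe that any positive matrix is strictly copositive on the orthant: for $z\in\R^m_+\setminus\{0\}$ there is some index $i$ with $z_i>0$ and
\[
\lng Cz,z\rng=\sum_{j,k}c_{jk}z_jz_k\geq c_{ii}z_i^2>0,
\]
since the discarded terms are all non-negative. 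A strictly copositive matrix has the \pq-property by the classical result available in \cite{MR3396730}, so $C$ is a \pq-matrix in $\dy(A)$.

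The main obstacle is the final citation: once one accepts that strict copositivity implies \pq, everything else is straightforward bookkeeping from the previously established lemmas. If one wanted to avoid the external citation, one could instead try to produce a solution of $\LCP(\R^m_+,C,q)$ by hand for each $q$ by exploiting the strict positivity of the entries of $C$; but invoking the classical strictly-copositive-implies-$\pq$ theorem is the cleanest route and fits the style of the rest of the paper, which already cites \cite{MR3396730} for the positive-definite-implies-\pb\ direction.
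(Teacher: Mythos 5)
Your proposal is correct and follows essentially the same route as the paper: the paper obtains the non-\pf-matrices by citing Theorem \ref{eqn} directly (whose part (a) is exactly the symmetrizant/spectral-decomposition argument you inline), and obtains the \pq-matrices via Lemma \ref{posdiag}, Lemma \ref{strpos} and the classical fact from \cite{MR3396730} that a positive matrix is a \pq-matrix. Your only addition is to make explicit the intermediate step that a positive matrix is strictly copositive, which is a harmless elaboration of the same citation.
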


\begin{proof}
By Theorem \ref{eqn} $\dy (A)$ must contain non-\pf-matrices.

By Lemma \ref{posdiag}, $\dy (A)$ must contain a matrix $B$ with at least one positive element on
its diagonal. Then, by Lemma \ref{strpos}, $\dy (B)=\dy (A)$ must contain a positive matrix.
By Theorem 3.8.5 in \cite{MR3396730}, it follows that a such matrix is a \pq-matrix.
\end{proof}

\begin{corollary}\label{vegso}
Suppose that $A\in \glm$.
Then, exactly one of the following alternatives hold:
\begin{enumerate}
\item $\lng Ax,x\rng >0$, $\forall x\in \R^m\setminus \{0\}$ $\iff$
each member of $\dy (A)$ is a \pf-matrix
$\iff$
each member of $\dy (A)$ is a \pq-matrix
$\iff$
each member of $\dy (A)$ is a \pb-matrix.
\item If $\lng Ax,x\rng \leq 0$, $\forall x\in \R^m$, then no matrix in
$\dy (A)$ can have the \pf-property.
\item If for some non-zero elements $x$ and $y$ in $\R^m$ one has
$\lng Ax,x\rng >0$ and $\lng Ay,y\rng \leq 0$, then $\dy (A)$ contains
non-\pf-matrices and \pq-matrices as well.
\end{enumerate}
\end{corollary}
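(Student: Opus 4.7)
The plan is to recognize the corollary as a trichotomy indexed by the sign behavior of the quadratic form $q_A(x)=\lng Ax,x\rng$. The hypotheses of the three alternatives exhaust and partition $\glm$: either $q_A$ is strictly positive on $\R^m\setminus\{0\}$, or $q_A\le 0$ everywhere, or $q_A$ takes values of both signs (note that $q_A(0)=0$ is automatic, so failure of positive definiteness produces a nonzero $y$ with $q_A(y)\le 0$). This makes the three alternatives automatically mutually exclusive, so it suffices to verify the complementarity-theoretic conclusion attached to each.

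For alternative 1 I would directly invoke Theorem \ref{eqn}, whose four-way equivalence is precisely the claimed chain. For alternative 3 the conclusion is literally the content of Theorem \ref{nempd}.

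The substantive step, and the only real obstacle, is alternative 2. The point I would exploit is that the hypothesis $\lng Ax,x\rng\le 0$ for every $x$ is congruence-invariant: for $B=L\tp AL$ and any $y\in\R^m$,
\[\lng By,y\rng = \lng A(Ly),Ly\rng \le 0,\]
so every $B\in\dy(A)$ inherits the property. I would then rule out the \pf-property for each such $B$ by contradiction. If $B$ enjoyed the \pf-property, then applying it to $q=-(1,\dots,1)\tp$ would produce $x\in\R^m_+$ with $Bx\ge (1,\dots,1)\tp$; in particular $x\ne 0$, so some coordinate $x_i>0$, while $(Bx)_i\ge 1$. Since each term of $\lng Bx,x\rng=\sum_j x_j(Bx)_j$ is non-negative and the $i$-th one is strictly positive, one obtains $\lng Bx,x\rng>0$, contradicting the inherited inequality.

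The essential technical move is this specific choice of $q$ and the observation that a strictly positive image $Bx$ is incompatible with a non-positive quadratic form on a non-zero $x\in\R^m_+$; everything else reduces to bookkeeping against the two theorems already established.
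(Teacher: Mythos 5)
Your proposal is correct and takes essentially the same route as the paper: alternatives 1 and 3 are read off from Theorems \ref{eqn} and \ref{nempd}, and alternative 2 is handled by feeding a strictly negative $q$ into the \pf-property and contradicting the congruence-invariant inequality $\lng Bx,x\rng\le 0$. The paper derives the item-2 contradiction by splitting $0\le\lng x,L\tp ALx+q\rng=\lng Lx,ALx\rng+\lng x,q\rng$ instead of arguing componentwise, but this is the same idea.
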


\begin{proof}
	Only item 2 needs proof. Let $A\in \glm$ a matrix with $\lng Ax,x\rng \leq 0$, $\forall x\in \R^m$. Suppose to the contrary that there is a matrix 
	$L\tp AL\in\dy (A)$ which has the \pf-property. Let $q\in\R^m$ be a vector with all components negative. Then, there exists $x\in\R^m_+$ such that
	\begin{equation}\label{ef}
		L\tp ALx+q\in\R^m_+.
	\end{equation}
	Thus, 
	\begin{equation}\label{epsp}
		0\le \lng x,L\tp ALx+q\rng=\lng Lx,ALx\rng+\lng x,q\rng.
	\end{equation}
	If $x\ne 0$, then the right hand side of equation \eqref{epsp} is negative, which is a contradiction. Hence, $x=0$. Then, \eqref{ef}
	implies $q\in\R^m_+$, which contradicts the choice of $q$. In conclusion, no matrix in $\dy (A)$ can have the \pf-property.
\end{proof}

The alternatives listed in Corollary \ref{vegso} can be formulated in complementarity terms too:

\begin{corollary}\label{vegso2}
Suppose that $A\in \glm$.
Then exactly one of the following alternatives hold :
\begin{enumerate}
\item $A$ possesses the $K$-\pf- property for any
simplicial cone $K$ $\iff$
$A$ possesses the $K$-\pq- property for any
simplicial cone $K$ $\iff$
$A$ possesses the $K$-\pb- property for any
simplicial cone $K$.
\item There is no simplicial cone $K$ for which $A$ possesses the $K$-\pf-property.
\item There exists a simplicial cone $K$ for which $A$ does not have
the $K$-\pf-property and there exists a simpicial cone $L$ for which
$A$ possesses the $L$-\pq-property.
\end{enumerate}
\end{corollary}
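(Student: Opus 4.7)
The plan is to reduce Corollary \ref{vegso2} to Corollary \ref{vegso} by building a dictionary between the $K$-properties of $A$ on a simplicial cone $K = L\R^m_+$ and the classical $\R^m_+$-properties of the congruence image $L\tp AL \in \dy(A)$. Once this dictionary is established, the three alternatives of Corollary \ref{vegso2} become term-by-term rephrasings, in the language of simplicial cones, of the three alternatives of Corollary \ref{vegso}.

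To construct the dictionary, I would first handle \pq\ and \pb. Proposition \ref{LCPclassic} gives $\SOL(K,A,q) = L\,\SOL(\R^m_+, L\tp AL, L\tp q)$, and since $q \mapsto L\tp q$ is a bijection of $\R^m$, this yields that $A$ has the $K$-\pq\ (resp.\ $K$-\pb) property iff $L\tp AL$ is a classical \pq-matrix (resp.\ \pb-matrix). For \pf\ I would use the characterization recalled in Remark \ref{pbpos}: $A$ has the $K$-\pf\ property iff $-AK + K^* = \R^m$. Substituting $K = L\R^m_+$ and $K^* = L\tpm\R^m_+$ (by Corollary \ref{gentranself}) and multiplying both sides by $L\tp$ reduces this to $-L\tp AL\,\R^m_+ + \R^m_+ = \R^m$, which is exactly the assertion that $L\tp AL$ is a classical \pf-matrix.

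Since $L \mapsto L\R^m_+$ runs over all simplicial cones as $L$ runs over $\glm$, and $L \mapsto L\tp AL$ correspondingly runs over all of $\dy(A)$, the dictionary transcribes item 1 of Corollary \ref{vegso2} into the chain ``every member of $\dy(A)$ is a \pf-matrix $\iff$ every member of $\dy(A)$ is a \pq-matrix $\iff$ every member of $\dy(A)$ is a \pb-matrix'', item 2 into ``no member of $\dy(A)$ is a \pf-matrix'', and item 3 into ``$\dy(A)$ contains both a non-\pf-matrix and a \pq-matrix''. These are precisely items 1, 2, 3 of Corollary \ref{vegso}, whose mutual exclusivity and exhaustiveness were established there. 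The only non-routine point is the \pf\ case of the dictionary, since Proposition \ref{ekviv} covers only \pq\ and \pb; once that short computation is in place, the remainder is purely translation.
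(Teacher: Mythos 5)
Your proof is correct and follows essentially the route the paper intends: the paper gives no explicit proof of this corollary, presenting it merely as a reformulation of Corollary \ref{vegso}, and your dictionary via Proposition \ref{LCPclassic} together with the characterization $-AK+K^*=\R^m$ of the $K$-\pf-property is exactly the translation being relied upon. If anything you are more careful than the paper, since Proposition \ref{ekviv} states only one implication and omits the \pf-case entirely, both of which your argument supplies.
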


\bibliographystyle{ieeetr}
\bibliography{orbit}

\end{document}